\newcommand{\lga}{\longrightarrow}
\newcommand{\lgaf}{\longleftarrow}
\newcommand{\limin}{\displaystyle \lim_{\longleftarrow}}
\newcommand {\Z}{\mathbb Z}
\newcommand {\R}{\mathbb R}
\newcommand {\N}{\mathbb N}
\newcommand {\sub}{\subset}
\newtheorem{theorem}{Theorem}[section]
\newtheorem{lemma}[theorem]{Lemma}
\newtheorem{proposition}[theorem]{Proposition}
\newtheorem{corollary}[theorem]{Corollary}
\newtheorem{conjecture}[theorem]{Conjecture}
\theoremstyle{definition}
\newtheorem{definition}[theorem]{Definition}
\theoremstyle{remark}
\newtheorem{remark}[theorem]{Remark}
\numberwithin{equation}{section}
\begin{document}

\title{One-relator groups and proper $3$-realizability}


\author{M. C\'ardenas, F.F. Lasheras, A. Quintero}
\address{Departamento de Geometr\'{\i}a y Topolog\'{\i}a, Universidad de Sevilla,
Apdo 1160, 41080-Sevilla, Spain}
\email{mcard@us.es, lasheras@us.es, quintero@us.es}
\author{D. Repov{\v{s}}}
\address{Institute of Mathematics, Physics and Mechanics, University of Ljubljana,
P.O. Box 2964, Ljubljana 1001, Slovenia}
\email{dusan.repovs@fmf.uni-lj.si}


\subjclass[2000]{Primary 57M07; Secondary 57M10, 57M20}

\keywords{proper homotopy equivalence, polyhedron, one-relator
group, proper $3$-realizability, end of group}
\date{}



\begin{abstract}
How different is the universal cover of a given finite $2$-complex
from a $3$-manifold (from the proper homotopy viewpoint)?
Regarding this question, we recall that a finitely presented group
$G$ is said to be properly $3$-realizable if there exists a
compact $2$-polyhedron $K$ with $\pi_1(K) \cong G$ whose universal
cover $\tilde{K}$ has the proper homotopy type of a PL
$3$-manifold (with boun\-dary). In this paper, we study the
asymptotic behavior of finitely generated one-relator groups and
show that those having finitely many ends are properly
$3$-realizable, by describing what the fundamental pro-group looks
like, showing a property of one-relator groups which is stronger
than the QSF property of Brick (from the proper homotopy
viewpoint) and giving an alternative proof of the fact that
one-relator groups are semistable at infinity.
\end{abstract}

\maketitle

\section{Introduction}
The following question was formulated in \cite{L2} for an
arbitrary finitely presented group $G$ : {\it does there exist a
compact $2$-polyhedron $K$ with $\pi_1(K) \cong G$ whose universal
cover $\tilde{K}$ is proper homotopy equivalent to a $3$-manifold
?} If so, the group $G$ is said to be {\it properly
$3$-realizable}. Recall that two spaces are said to be proper
homotopy equivalent if they are homotopy equivalent and all
homotopies involved are proper maps, i.e., they have the property
that the inverse image of every compact subset is compact. It is a
fact that one can get the above universal cover $\tilde{K}$ of $K$
proper homotopy equivalent to a $4$-manifold, as one can take $K$
to be the Cayley complex associated to a (finite) group
presentation of $G$, which can easily be embeded in ${\R}^4$.
Moreover, it is known that the proper homotopy type of any locally
finite $2$-dimensional $CW$-complex can be represented by a
subpolyhedron in ${\R}^4$ (see \cite{CFLQ}). The question whether
or not one can do better than this, i.e., whether or not a given
finitely presented group $G$ is properly $3$-realizable, is of
interest as it has implications in the theory of cohomology of
groups : if $G$ is properly $3$-realizable then for some
(equivalently any) compact $2$-polyhedron $K$ with $\pi_1(K) \cong
G$ the group $H^2_c(\tilde{K};{\Z})$ is free abelian (by manifold
duality arguments), and hence so is $H^2(G;{\Z}G)$ (see
\cite{GM}). It is a long standing conjecture (attributed to Hopf)
that $H^2(G;{\Z}G)$ is free abelian for every finitely presented
group $G$. Observe that there are examples of locally compact,
simply connected $2$-polyhedra $X$ for which $H^2_c(X;{\Z})$ is
not free abelian, but they are known not to be covering spaces of
compact polyhedra (see \cite{CFLQ, L2}). There are several results
in the literature regarding properly $3$-realizable groups (see
\cite{ACLQ,CL2,CLQR,CLR,L4}). An example of a non-$3$-manifold
group which has this property is already pointed out in \cite{L2}.
 It is worth mentioning that we have recently found that there are
groups which are $1$-ended and semistable at infinity but not
properly $3$-realizable (see \cite{C et al, FLR}). The main result of
the present paper is:
\begin{theorem} Every finitely generated one-relator group $G$ with finitely many ends is properly $3$-realizable.
\end{theorem}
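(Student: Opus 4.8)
The plan is to induct along the Magnus--Moldavanskii hierarchy, after first disposing of the degenerate cases. If $G$ has no ends it is finite, hence cyclic (a one-relator group on $\geq 2$ generators has deficiency $\geq 1$ and so infinite first homology), say $G\cong\Z/k$; such a group is properly $3$-realizable because the universal cover of the presentation complex of $\langle x\mid x^{k}\rangle$ is homotopy equivalent to a boundary connected sum of $k-1$ copies of $S^{2}\times[0,1]$, a compact simply connected PL $3$-manifold with boundary. If $G$ has two ends it is virtually $\Z$, which forces $G\cong\Z$ (no one-relator group with torsion is virtually $\Z$), and $\Z$ is properly $3$-realizable since the universal cover $\R$ of $S^{1}$ is proper homotopy equivalent to the solid cylinder $\R\times D^{2}$. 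So assume henceforth that $G$ is one-ended; then $G$ is freely indecomposable, and after normalizing the homology class of the relator by an automorphism of the free group one may present $G$ as $\langle x_{1},\dots,x_{n}\mid r\rangle$ with $n\geq 2$ in which every generator occurs in $r$ and all but at most one of the $x_{i}$ has exponent sum $0$ in $r$.

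Magnus's rewriting process then exhibits $G$ --- or, in the exceptional case, a one-relator overgroup containing $G$ obtained by the Moldavanskii embedding trick --- as an HNN extension $\langle H,t\mid t\,At^{-1}=B\rangle$ in which $H$ is a one-relator group with a shorter relator (possibly itself freely decomposable, which adds some bookkeeping) and $A,B\leq H$ are finitely generated free Magnus subgroups; iterating yields a finite hierarchy whose terminal groups are finitely generated free. The inductive claim, carried for every group $L$ appearing in the hierarchy, packages a compact $2$-polyhedron $K_{L}$ with $\pi_{1}(K_{L})\cong L$, a PL $3$-manifold with non-empty boundary $M_{L}$, a proper homotopy equivalence $\widetilde{K}_{L}\simeq M_{L}$, and the extra information that the one-dimensional subcomplexes of $\widetilde{K}_{L}$ covering the free subgroups which will serve as edge groups one level up are carried, after the obvious thickening, onto a locally finite family of disjoint, two-sided, properly embedded surfaces in $M_{L}$ in good position. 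The base case is transparent: for a free group take $K_{L}$ a wedge of circles, so $\widetilde{K}_{L}$ is a locally finite tree, which is proper homotopy equivalent to a regular neighbourhood of itself in $\R^{3}$ --- a handlebody-like $3$-manifold with boundary, with the subtrees covering free factors automatically in good position --- and the fundamental pro-group of a tree at infinity is pro-trivial, hence pro-(finitely generated free).

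In the inductive step one forms $K_{G}$ from $K_{H}$ by attaching a mapping-torus band realizing the stable letter $t$ and the relation $tAt^{-1}=B$; then $\widetilde{K}_{G}$ is a tree of spaces over the Bass--Serre tree of the HNN splitting, with vertex spaces copies of $\widetilde{K}_{H}$ and edge spaces copies of the contractible, one-dimensional universal cover of the free edge complex. Because those edge spaces are trees, complements of compacta in them are simply connected, so a van Kampen argument over the Bass--Serre tree identifies the fundamental pro-group of $\widetilde{K}_{G}$ at its unique end with a ``free product along the tree'' of the fundamental pro-groups of the copies of $\widetilde{K}_{H}$, hence again pro-isomorphic to an inverse sequence of finitely generated free groups; a refinement of the argument makes the bonding maps surjective, which both re-proves that $G$ is semistable at infinity and yields the asserted property strengthening Brick's QSF (free fundamental pro-groups are killed by attaching handlebodies, so the approximating compacta can be taken inside compact simply connected $3$-manifolds). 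Geometrically, one produces $M_{G}$ by cutting each copy of $M_{H}$ along the two properly embedded surfaces supplied by the inductive hypothesis and regluing the resulting pieces along the Bass--Serre tree; since cutting and regluing are mutually inverse the corners cancel and $M_{G}$ is again a PL $3$-manifold with boundary, a Mayer--Vietoris / tree-of-spaces argument upgrades the evident homotopy equivalence $\widetilde{K}_{G}\simeq M_{G}$ to a proper one, and one installs en route the fresh surfaces that $M_{G}$ must carry for the next level. When $G$ itself is one-ended, the model $\widetilde{K}_{G}\simeq M_{G}$ is precisely what is required.

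The main obstacle is this geometric realization. One must guarantee, at every level of the hierarchy, that the covers of the Magnus subgroups can be pushed, by a proper homotopy, onto disjoint two-sided properly embedded surfaces of the $3$-manifold models --- a statement about the proper homotopy type of the two-dimensional complex $\widetilde{K}_{H}$, which is typically not one-ended even when $G$ is, so the ends of all the pieces and the way the Bass--Serre tree assembles them must be controlled with care --- and that the cut-and-reglue genuinely produces a $3$-manifold, including the free-product assemblies along wedge points, without destroying properness. This delicacy is, I expect, exactly the point at which the hypothesis of finitely many ends is used essentially: the assembly along wedge points needs the relevant $0$-cell lifts to be pushable onto the boundary of the $3$-manifold model, which is not possible coarsely once the group has infinitely many ends, consonant with the fact that proper $3$-realizability is subtle under free products. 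A secondary difficulty is the exceptional Moldavanskii case: since proper $3$-realizability is not known to pass to subgroups, the embedding $G\hookrightarrow\widehat{G}$ must be chosen concretely enough that $\widetilde{K}_{G}$ inherits a $3$-manifold model from $\widetilde{K}_{\widehat{G}}$; similarly one must dovetail with the refined hierarchy available, via B.\,B.\,Newman's spelling theorem, for one-relator groups with torsion.
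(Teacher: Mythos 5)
There is a genuine gap, and it sits exactly where you place your hopes. Your outline follows the same skeleton as the paper — dispose of the $0$- and $2$-ended cases (correctly), then induct along the Magnus hierarchy, viewing $\widetilde{K}_G$ as a tree of spaces with vertex pieces covering the shorter-relator group and edge pieces trees covering Magnus subgroups — but the step that actually yields proper $3$-realizability is never carried out. You propose to drag a PL $3$-manifold model $M_L$, together with a locally finite family of properly embedded two-sided surfaces carrying the Magnus subgroups ``in good position,'' through every level of the hierarchy, and to produce $M_G$ by cutting and regluing along the Bass--Serre tree; you then concede that guaranteeing this geometric realization (pushing the covers of the Magnus subgroups onto such surfaces by a \emph{proper} homotopy, controlling the ends of the many-ended vertex pieces, and keeping the reglued object a $3$-manifold) is ``the main obstacle'' which you ``expect'' is where the finitely-many-ends hypothesis enters. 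That is precisely the content of the theorem, not a detail, and no argument is given for it. The paper avoids this entirely: it never builds a $3$-manifold by hand. It proves (Proposition \ref{prop}, via the internal expansions of Lemma \ref{alter} which force the filtration compacta to meet the gluing trees in connected subtrees) that $\widetilde{K}_P$ is strongly proper homotopy equivalent to a complex admitting a \emph{nice filtration}, so that $pro$-$\pi_1$ is a telescopic tower of finitely generated free groups with projection bonding maps, and then invokes the prior result (\cite{L4}, Thm.~1.2) that a $1$-ended group with such a fundamental pro-group is properly $3$-realizable. In particular the finiteness-of-ends hypothesis is used only through that citation (plus the trivial $0$- and $2$-ended cases), not through any surgery on $3$-manifold pieces; your diagnosis of where the hypothesis is ``used essentially'' does not match any step you could actually verify.

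A second, related gap is the exceptional case where no generator has exponent sum zero. You note yourself that proper $3$-realizability is not known to pass to subgroups, so the Moldavanskii-type embedding $G\hookrightarrow\widehat{G}$ cannot simply hand a $3$-manifold model down to $\widetilde{K}_G$; in your framework this is unresolved. The paper's $(Case\;2)_n$ works because the thing being transported is not a $3$-manifold structure but the pro-$\pi_1$ computation: after the substitutions $x_{i_0}\mapsto A^{q}$, $x_{i_1}\mapsto BA^{-p}$ it exhibits $\widetilde{K}_P$ (via the adjunction complex $W=K_P\cup_{e_{i_0}}M$ and the lines $(q')^{-1}(A)$, kept intact by Remark \ref{lines}) as a subcomplex of $\widehat{K}_{P''}$ cut along lines, so that complements of the restricted filtration have fundamental groups that are free factors (Grushko--Neumann) and the telescopic structure survives. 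So your proposal is best read as a program whose two hardest steps — the inductive $3$-manifold realization with embedded surfaces, and the descent from the overgroup in the exceptional case — are exactly the ones left unproved; to close it along the paper's lines you would replace the manifold-carrying induction by the nice-filtration/telescopic pro-group induction and then quote \cite{L4}.
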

\indent Recall that, given a compact $2$-polyhedron $K$ with
$\pi_1(K) \cong G$ and $\tilde{K}$ as its universal cover, the
number of ends of $G$ equals the number of ends of $\tilde{K}$
which in turn equals $0,1, 2$ or $\infty$ (see \cite{Geo,SWa}).

\begin{proof}[Proof of Theorem 1.1] Given a one-relator group $G$ and a
presentation $P$ of $G$ with a single defining relation, it is
shown in Proposition \ref{prop} below that we can alter (within
its proper homotopy type) the universal cover $\tilde{K}_P$ of the
standard $2$-complex $K_P$ associated to this presentation so as
to get a new $2$-complex $\widehat{K}_P$ together with a
filtration $\widehat{C}_1 \sub \widehat{C}_2 \sub \cdots \sub
\widehat{K}_P$ of compact simply connected subcomplexes such that
(for any given base ray in $\widehat{K}_P$) the tower of groups,
$pro-\pi_1(\widehat{K}_P)$,
\[
\{1\} \leftarrow \pi_1(\widehat{K}_P - int(\widehat{C}_1))
\leftarrow \pi_1(\widehat{K}_P - int(\widehat{C}_2)) \leftarrow
\cdots
\]
is a telescopic tower, i.e., it is a tower of finitely generated
free groups of increasing bases where the bonding maps are
projections (see $\S \ref{appendix})$. Thus,
$pro-\pi_1(\tilde{K}_P)$ (and hence the fundamental pro-group of
$G$) is also of that type, up to pro-isomorphism. In the $1$-ended
case it is known that a group with such a fundamental pro-group is
properly $3$-realizable (see (\cite{L4}, Thm. 1.2)). Note that we
are already done in the $0$-ended or $2$-ended case as the
$0$-ended case corresponds to finite cyclic groups, and in the
$2$-ended case we only have to deal with the group ${\Z}$ of
integers, which is the only one-relator group having ${\Z}$ as a
subgroup of finite index. See also (\cite{ACLQ}, Cor. 1.2).
\end{proof}
\begin{remark} Observe that the proof of Theorem 1.1 given above shows
that the group $G$ is semistable at infinity (cf. \cite{MT}). See
$\S \ref{appendix}$.
\end{remark}
\indent It is worth noting that if $P$ is any finite group
presentation of $G$ with a single defining relation and $K_P$ is
the standard $2$-complex (with a single vertex and a single
$2$-cell) associated to this presentation, then Theo\-rem 1.1
together with (\cite{ACLQ}, Prop. 1.3) yields that the universal
cover of $K_P \vee S^2$ is proper homotopy equivalent to a
$3$-manifold. In fact, we conjecture that we may disregard the
$2$-sphere, having the universal cover of $K_P$ proper homotopy
equivalent to a $3$-manifold itself.
\begin{corollary} Every torsion-free finitely generated one-relator group $G$ is properly $3$-realizable.
\end{corollary}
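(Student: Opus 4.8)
The strategy is to bootstrap from Theorem~1.1 by means of a free-product decomposition, so that the only genuinely new case is that of infinitely many ends. Recall that the number of ends of $G$ is $0$, $1$, $2$ or $\infty$, and that in the first three cases $G$ has finitely many ends, hence is properly $3$-realizable by Theorem~1.1. So from now on I assume that $G$ has infinitely many ends. Fix a one-relator presentation of $G$: if its defining relator is empty or primitive, then $G$ is free and we are reduced to the situation handled at the end, so I may assume the cyclically reduced defining relator is a nonempty non-primitive word, which --- since $G$ is torsion-free --- is moreover not a proper power; in particular the classical structure theory of one-relator groups applies.

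Since $G$ is finitely generated with infinitely many ends, Stallings' ends theorem provides a nontrivial splitting of $G$ over a finite subgroup, which, $G$ being torsion-free, is a nontrivial free product. By a theorem of Shenitzer, a finitely generated one-relator group that is a nontrivial free product splits as $(\text{a one-relator group on strictly fewer generators}) \ast (\text{a nontrivial finitely generated free group})$; indeed, after an automorphism of the generating free group the defining relator may be taken to lie in a proper free factor. Iterating this on the non-free factor --- whose number of generators strictly decreases at each step --- I reach $G \cong H \ast F$, where $F$ is a finitely generated free group and $H$ is a finitely generated, torsion-free, freely indecomposable one-relator group. Since $G$ is not free, $H$ is non-trivial and non-free; being torsion-free, finitely generated and freely indecomposable, $H$ cannot have infinitely many ends (by Stallings again), so it has finitely many ends and is properly $3$-realizable by Theorem~1.1. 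Moreover $F$ is nontrivial, because $G$ has infinitely many ends while $H$ does not.

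It remains to use two facts about proper $3$-realizability: that finitely generated free groups are properly $3$-realizable --- realized, say, by a compact planar surface or by the spine of a handlebody --- and that the class of properly $3$-realizable groups is closed under free products (see \cite{ACLQ}). Applying the latter to $G \cong H \ast F$ completes the argument. I expect the main obstacle to be precisely this last closure property and the algebraic reduction underlying it: one needs that the single defining relation of a freely decomposable one-relator group can be absorbed into one free factor (so that the induction stays within the class of one-relator groups), and that the two $3$-manifold models can be glued along boundary disks following the Bass--Serre tree of the free product, yielding an honest locally finite PL $3$-manifold proper homotopy equivalent to the universal cover of the wedge of the two presentation $2$-complexes. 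If the closure statement is not available in exactly the required form, I would establish it directly through this boundary-connected-sum-along-a-tree construction, checking properness of the homotopy equivalence uniformly across the tree.
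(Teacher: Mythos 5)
Your proposal is correct and follows essentially the same route as the paper: reduce the infinite-ended case via Stallings to a free product $G \cong H \ast F$ with $H$ a one-relator group having at most one end and $F$ free, then combine Theorem~1.1 with the proper $3$-realizability of free groups and the closure of the class under free products from \cite{ACLQ}. The only (minor) difference is that you justify the termination of the splitting process by Shenitzer's theorem together with induction on the number of generators, where the paper invokes Dunwoody's accessibility; your version in fact makes explicit the step, glossed over in the paper, that the non-free factor is again a one-relator group.
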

\begin{proof} Observe that, in general, if a group has infinitely many ends
then Stallings' theo\-rem tells us that it splits as an
amalgamated free product or an HNN-extension over a finite group
(see \cite{SWa, Geo}), and Dunwoody's accessibility result
\cite{D} shows that the process of further factorization of the
group in this way must terminate after a finite number of steps,
and each of the factors can have at most one end. In the
torsion-free case, the above translates into a decomposition of
$G$ into a free product of a free group with a one-relator group,
the latter having at most one end. The conclusion now follows from
Theorem 1.1 and the fact that free groups are properly
$3$-realizable, and free pro\-ducts of properly $3$-realizable
groups are again properly $3$-realizable (see (\cite{ACLQ}, Thm.
1.4)).
\end{proof}
\indent On the other hand, a conjecture of the following type was
stated in \cite{FP}: {\it if a torsion-free one relator group
decomposes as a certain amalgamated free product $A*_CB$ over a
free group $C$, then each of the factors $A$ and $B$ must be
either a free group or a one-relator group}. In fact, it was
proved in \cite{FP} that this is so from the homology viewpoint.
We hereby pose the following conjecture :
\begin{conjecture} If a one-relator group with torsion
decomposes as an amalgamated free product $A*_CB$ (resp. an
HNN-extension $A*_C$) over a finite group $C$ (which is
necessarily cyclic, see \cite{KS,LS}), then each of the factors
$A$ and $B$ (resp. the base group $A$) must be either a
one-relator group or a free product of one-relator groups (with
torsion).
\end{conjecture}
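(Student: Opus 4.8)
We do not know how to settle this conjecture, but we sketch what seems to us the most promising line of attack. Write $G = \langle x_1,\dots,x_k \mid R^n\rangle$ with $n \ge 2$ and $R$ cyclically reduced and not a proper power, so that every finite subgroup of $G$ is conjugate into $\langle R\rangle \cong {\Z}/n$ and $G$ is word-hyperbolic (B.\,B.\,Newman; see \cite{LS}). Since $C$ is finite we may, after conjugating the splitting, assume $C = \langle R^d\rangle$ for some divisor $d$ of $n$. If $d = n$, then $C = 1$, the splitting is a free product, and the conclusion follows from the Grushko and Kurosh theorems together with Magnus' description of the freely decomposable one-relator groups (see \cite{LS}). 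If $C \ne 1$, one first uses Theorem 1.1, Stallings' structure theorem and Dunwoody's accessibility \cite{D}, together with an induction on the complexity of the decomposition, to reduce to the case in which the vertex groups are one-ended; being quasiconvex in the hyperbolic group $G$, they are then themselves word-hyperbolic, finitely presented, virtually torsion-free, and of virtual cohomological dimension at most $2$.

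The next step is homological, in the spirit of the partial result of \cite{FP} for torsion-free amalgams over free groups. The group $G$ has a well-defined rational Euler characteristic $\chi(G) = 1 - k + \tfrac{1}{n}$, and Chiswell's additivity formula along the splitting gives $\chi(A) + \chi(B) = \chi(G) + \tfrac{1}{|C|}$ in the amalgamated case and $\chi(A) = \chi(G) + \tfrac{1}{|C|}$ in the HNN case. Combining this with the Mayer--Vietoris sequence of the splitting with group-ring coefficients, the vanishing of cohomology with group-ring coefficients in degrees $\ge 3$, and the finiteness of the edge group, one should be able to show that $H^2(A;{\Z}A)$ and $H^2(B;{\Z}B)$ are free abelian of the rank predicted by a free product of one-relator groups with torsion --- the homological shadow of the conjecture.

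The main obstacle --- and the reason the statement remains conjectural --- is to upgrade this homological picture to an actual decomposition, since a word-hyperbolic group of virtual cohomological dimension at most $2$ with free-abelian $H^2$ need not be a free product of one-relator groups; one therefore has to exploit how the edge group $\langle R^d\rangle$ sits inside $G$. The natural tool is the Magnus--Moldavanskii hierarchy (see \cite{LS}): one would try to arrange the Bass--Serre tree of the given splitting so as to be compatible with a stage of the hierarchy --- roughly, so that the Magnus subgroups occurring in the hierarchy run parallel to the edge group --- and then read off one-relator presentations of the vertex groups by induction as one descends the hierarchy, the base case being that groups of the form $F_r * {\Z}/m$ are free products of one-relator groups with torsion. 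Making a given splitting over a power of $R$ simultaneously compatible with the hierarchy is precisely where I expect the argument to be delicate; a more geometric alternative would replace the hierarchy by a Dunwoody--Stallings style accessibility analysis of the action of $G$ on a suitable $2$-dimensional model, but the same difficulty --- relating the invariant edge group to the unique defining relator --- reappears there.
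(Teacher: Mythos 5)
This statement is Conjecture 1.3 of the paper: the authors pose it as an open problem and give no proof, so there is no argument of theirs to measure yours against. You are candid that your text is a research program rather than a proof, and as a proof it does have a genuine gap --- exactly the one you single out at the end. Nothing in the reduction to one-ended quasiconvex vertex groups, nor in the Euler-characteristic bookkeeping (Chiswell's additivity giving $\chi(A)+\chi(B)=\chi(G)+\tfrac{1}{|C|}$ with $\chi(G)=1-k+\tfrac{1}{n}$ is correct), nor in a Mayer--Vietoris computation of $H^2$ with group-ring coefficients, can force $A$ and $B$ to admit one-relator presentations: groups meeting all of those homological constraints need not be free products of one-relator groups, so until the Bass--Serre tree of the given splitting over $\langle R^d\rangle$ is actually made compatible with the Magnus--Moldavanskii hierarchy, the conjecture is untouched. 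In short, the proposal identifies reasonable invariants and the right obstacle, but it establishes nothing beyond known facts, which is consistent with the status the paper itself assigns to the statement.

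A few details of the sketch also need care if you pursue it. Quasiconvexity in the hyperbolic group $G$ does not by itself yield virtual torsion-freeness; for one-relator groups with torsion this is known only through much heavier machinery than anything used in this paper. The intended conclusion that $H^2(A;\Z A)$ is ``free abelian of the rank predicted'' needs a precise formulation, since these cohomology groups are typically not finitely generated; freeness is the meaningful assertion (the one the introduction relates to Hopf's conjecture), not a rank count. Invoking Theorem 1.1 in the reduction to one-ended vertex groups is a non sequitur: that theorem concerns proper $3$-realizability and has no bearing on a purely algebraic splitting statement --- indeed the paper's logic runs the other way, since it is the truth of Conjecture 1.3 that would, via Stallings and Dunwoody, reduce proper $3$-realizability of all finitely generated one-relator groups to Theorem 1.1. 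Finally, in the $C=1$ case the Grushko--Kurosh argument gives one factor free (or with a free part), so the conclusion as stated in the conjecture, with the parenthetical ``with torsion,'' only holds under a convention about how free factors are counted; that reduction should be stated carefully.
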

\indent Observe that if this conjecture is true then, using the
results of Stallings and Dunwoody (as in Corollary 1.2), the
problem of showing that all finitely generated one-relator groups
are properly $3$-realizable can be reduced to the $1$-ended case,
by (\cite{ACLQ}, Thm. 1.4), and hence completely solved by Theorem
1.1. Thus, we conjecture the following :
\begin{conjecture} The ``finitely many ended" hypothesis in
Theorem 1.1 can be omitted.
\end{conjecture}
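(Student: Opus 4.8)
The plan is to derive this conjecture from the preceding one by exactly the argument indicated above, i.e. via the structure theorems of Stallings and Dunwoody as in Corollary 1.2. Let $G$ be a finitely generated one-relator group. By Theorem 1.1 we may assume that $G$ has infinitely many ends, and by Corollary 1.2 we may assume that $G$ has torsion. By Stallings' theorem $G$ then splits as an amalgamated free product $A*_CB$ or an HNN-extension $A*_C$ over a finite subgroup $C$, which is cyclic by \cite{KS,LS}; by Dunwoody's accessibility theorem \cite{D} this process of factorization terminates after finitely many steps, so that $G$ is the fundamental group of a finite graph of groups whose edge groups are finite cyclic and whose vertex groups have at most one end.

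The next step is to identify the vertex groups. Granting the preceding conjecture, a one-relator group with torsion that splits over a finite cyclic subgroup has vertex (resp. base) groups that are one-relator groups or finite free products of one-relator groups with torsion; feeding this into the accessibility argument, and using Bass--Serre theory at each stage to transfer a finite-cyclic splitting of such a free product onto one of its one-relator free factors, one sees that every vertex group in the final decomposition of $G$ is a finite free product whose factors are one-relator groups or free groups. Since each of these factors inherits at most one end it is finite cyclic, infinite cyclic, or one-ended, and hence properly $3$-realizable by Theorem 1.1; because a free product of properly $3$-realizable groups is again properly $3$-realizable by (\cite{ACLQ}, Thm. 1.4), every vertex group is properly $3$-realizable. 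Reassembling $G$ from the finite graph of groups by successively forming amalgamated products and HNN-extensions over its finite edge groups, a last application of (\cite{ACLQ}, Thm. 1.4) shows that $G$ itself is properly $3$-realizable.

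The principal obstacle is the preceding conjecture, which is the one ingredient not yet available. Its torsion-free counterpart, for splittings over free subgroups, was verified at the homological level in \cite{FP}, so what remains is to establish it at the level of the groups themselves in the stated generality; I expect this to require the fine structure theory of one-relator groups with torsion---Newman's Spelling Theorem, the description of their torsion and of their finite and two-generator subgroups, and a Bass--Serre analysis of how a cyclically reduced relator of minimal length can be distributed across the factors of an amalgam or the base of an HNN-extension. A secondary and more routine point is to confirm that the version of (\cite{ACLQ}, Thm. 1.4) invoked above really does handle amalgamated products and HNN-extensions over arbitrary finite subgroups, and not merely free products; should it be stated only for free products, one would first extend it along the same lines, by gluing together the corresponding trees of $3$-manifolds.
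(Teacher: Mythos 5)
This statement is posed in the paper as a conjecture: the authors give no proof of it, only the remark (just before its statement) that \emph{if} the preceding conjecture on splittings of one-relator groups with torsion over finite cyclic subgroups holds, then Stallings' theorem, Dunwoody's accessibility and the combination theorem for properly $3$-realizable groups reduce everything to the finitely-many-ended case settled by Theorem 1.1. Your proposal is essentially that same remark written out in detail, and you are candid about its status: the whole argument is conditional on the preceding conjecture, which neither you nor the paper proves (only its torsion-free, homological analogue over free subgroups is known, by \cite{FP}). That is the genuine gap, and it is not a peripheral obstacle that the rest of the argument works around --- it is the entire content of the open problem. A reduction of one conjecture to another, however faithful to the authors' intentions, is not a proof of the statement.

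Two further points in your reduction also need more than you give them. First, in the infinite-ended torsion case the Stallings--Dunwoody decomposition has nontrivial finite cyclic edge groups, so (\cite{ACLQ}, Thm.~1.4), which concerns free products, does not cover the reassembly step as stated; you need closure of proper $3$-realizability under amalgamated products and HNN-extensions over finite subgroups. The amalgamated case is precisely the subject of \cite{CLQR}, and the HNN analogue requires its own argument: calling it a routine matter of gluing trees of $3$-manifolds understates it, since one must control the fundamental pro-group at infinity of the resulting union, which is exactly the delicate issue occupying the body of this paper. Second, your step transferring a finite-cyclic splitting of a free product onto one of its one-relator free factors is asserted without justification; it can be extracted from the Kurosh subgroup theorem (conjugating the edge group into a factor) together with standard Bass--Serre manipulations, but as written it is a gap. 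None of this changes the verdict: even with those repairs, the proposal establishes only the implication the authors already state, not the conjecture itself.
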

\section{One-relator groups and their structure at infinity}
The purpose of this section is to obtain some asymptotic
properties of one-relator groups, which will be essential for our
proof of Theorem 1.1. From now on, all complexes will be assumed
to be PL
CW-complexes (in the sense of (\cite{HMS}, $\S 1.4$)).\\
\indent Given a finitely presented group $G$ and a finite
$2$-dimensional CW-complex $X$ with $\pi_1(X) \cong G$, we recall
that $G$ is said to be QSF (i.e., {\it quasi simply filtered}) if
the universal cover $\tilde{X}$ of $X$ admits an exhaustion which
can be ``approximated" by finite simply connected CW-complexes,
i.e., for every finite subcomplex $A \sub \tilde{X}$ there is a
cellular map $f : Y \lga \tilde{X}$ from a finite simply connected
CW-complex $Y$ which is a homeomorphism on $f^{-1}(A)$. On the
other hand, $G$ is said to be WGSC (i.e., {\it weakly
geometrically simply connected}) if $\tilde{X}$ has in
fact an exhaustion by finite simply connected subcomplexes.\\
\indent It was shown in \cite{BM} that all one-relator groups are
QSF. Furthermore, the properties QSF and WGSC have recently been
shown to be equivalent (see \cite{FG,FO}). Next, in Proposition
\ref{prop} below, we shall demonstrate a stronger property of
one-relator groups (from the proper homotopy viewpoint) giving
also an alternative proof of the fact that one-relator groups are
semistable at infinity (see \cite{MT}). We shall need the
following :
\begin{definition} Let $X$ be a $2$-dimensional (PL) CW-complex and let
$d^n, e^{n+1} \sub X$ be cells of $X$, $d$ being a free face of
$e$ when considered as a subcomplex of $X$. An {\it elementary
internal collapse $(e,d)$} in $X$ consists of ``collapsing" the
cell $e$ through its face $d$, even if $d$ is not a free face of
$e$ within the entire complex $X$. Of course, this implies
dragging all the material adjacent to that face thus producing a
new CW-complex proper homotopy equivalent to $X$. We will refer to
any of the inverses of an elementary internal collapse as an {\it
elementary internal expansion}.
\end{definition}
\begin{definition} We say that two (possibly non-compact) $2$-complexes $X$ and $Y$
are {\it strongly proper homotopy equivalent} if one is obtained
from the other by a (possibly infinite) sequence of elementary
internal collapses and/or expansions, in such a way that the
resulting homotopy equivalence is a proper homotopy equivalence.
\end{definition}
\begin{definition} Let $X$ be a $2$-dimensional simply connected
CW-complex. We will say that a filtration $C_1 \sub C_2 \sub
\cdots \sub X$ of compact subcomplexes is {\it nice} if each $C_n$
is simply connected and, for any choice of base ray $[0, \infty)
\lga X$, the fundamental pro-group
\[
\{1\} \leftarrow \pi_1(X - int(C_1)) \leftarrow \pi_1(X -
int(C_2)) \leftarrow \cdots
\]
is a tower of finitely generated free groups of increasing bases
where the bonding maps are projections.
\end{definition}
\begin{lemma} \label{alter} Let $X$ be a $2$-dimensional simply connected
CW-complex together with a nice filtration $C_1 \sub C_2 \sub
\cdots \sub X$, and let $T_i, i \in I$, be a (locally finite)
collection of trees inside $X$. We can get a new $2$-complex
$\widehat{X}$ (strongly proper homotopy equivalent to $X$) still
containing the $T_i$'s and a nice filtration $\widehat{C}_1 \sub
\widehat{C}_2 \sub \cdots \sub \widehat{X}$ such that each
intersection $\widehat{C}_n \cap T_i$ ($n \geq 1, i \in I$) is
either empty or a connected subtree (and hence contractible).
\end{lemma}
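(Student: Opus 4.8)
The plan is to repair one tree at a time and to build the new filtration $\widehat{C}_1 \sub \widehat{C}_2 \sub \cdots$ by induction on the level, passing to a limit at the end. Since the family $\{T_i\}_{i\in I}$ is locally finite, for each $n$ only finitely many of the $T_i$ meet $C_n$, so at each stage there are only finitely many pairs of components to merge; and because each individual repair can be carried out inside a bounded neighbourhood of the tree it concerns, the (possibly infinite) sequence of elementary internal collapses and expansions involved composes to a proper map, so that the resulting $\widehat{X}$ is strongly proper homotopy equivalent to $X$. Thus the whole content is the inductive step: assuming $\widehat{C}_{n-1}$ has been arranged so that each $\widehat{C}_{n-1}\cap T_i$ is empty or a single subtree, produce a compact simply connected $\widehat{C}_n \supseteq \widehat{C}_{n-1}\cup C_n$ (these choices exhausting $\widehat{X}$ in the limit) with each $\widehat{C}_n\cap T_i$ empty or a single subtree and with $\{\pi_1(\widehat{X}-int(\widehat{C}_m))\}$ still a telescopic tower.

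For the step I would first enlarge $\widehat{C}_{n-1}\cup C_n$ to a finite simply connected subcomplex $D$ of the current complex (possible since $X$, and hence any complex strongly proper homotopy equivalent to it, admits an exhaustion by finite simply connected subcomplexes — one may take $D=C_m$ for $m$ large). Now for each of the finitely many $T_i$ for which $D\cap T_i = A^i_0\sqcup A^i_1\sqcup\cdots\sqcup A^i_{k_i}$ is disconnected, and for each $l\geq 1$, let $\gamma\sub T_i$ be the arc of the tree joining $A^i_0$ to $A^i_l$ and let $\alpha\sub D$ be an edge path with the same endpoints; since $D$ is simply connected, $\gamma\cdot\alpha^{-1}$ bounds a van Kampen diagram $f\colon\Delta\lga D$. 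The key move is to \emph{drag $\gamma$ across $f(\Delta)$}: the elementary homotopies sweeping $\gamma$ successively through the $2$--cells of $f(\Delta)$ can be realized by a finite sequence of elementary internal collapses and expansions, and carrying $T_i$ along (after a preliminary subdivision and a general--position adjustment so that the swept arcs stay embedded and avoid the rest of the relevant $1$--skeleton, $T_i$ remains a tree) this replaces $\gamma$ by $\alpha$. Doing this for all offending $i$ and all $l$, the intersection of each such (dragged) $T_i$ with $D$ becomes connected; after absorbing finitely many dangling whisker arcs, which does not affect simple connectivity, one takes $\widehat{C}_n$ to be the resulting finite simply connected subcomplex.

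I expect two genuine obstacles. First, each drag alters $T_i$ globally and so touches all levels at once, so the induction must be organized so that the modification at level $n$ does not undo the connectedness obtained at levels $<n$; this should be arranged by performing the drag within a neighbourhood of $\widehat{C}_n$ disjoint from $\widehat{C}_{n-1}$ — legitimate because $\widehat{C}_{n-1}\cap T_i$ is already a single subtree, so the arcs $\gamma$ still to be dragged run only between components of $\widehat{C}_n\cap T_i$ lying outside $\widehat{C}_{n-1}$ — but making this precise, and checking that the infinite composition of all the drags is proper, is the bulk of the bookkeeping. Second, and I expect this to be the technical heart, one must verify that re--choosing the filtration this way keeps it \emph{nice}, i.e.\ that every $\pi_1(\widehat{X}-int(\widehat{C}_n))$ stays finitely generated free with the bonding maps still projections. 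The point to exploit is that the dragged arcs and the absorbed whiskers are attached to the complement $\widehat{X}-int(\widehat{C}_n)$ along \emph{arcs}, not along essential loops, so their effect on the fundamental group of the complement is only to transport free factors already present; combined with the telescopic structure of the original filtration (and the resulting semistability, see $\S\ref{appendix}$) this should show the new tower is again telescopic up to pro--isomorphism.
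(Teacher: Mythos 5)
Your strategy differs from the paper's in a way that creates real problems. You make the intersections connected by dragging arcs of the trees themselves into the compact piece $D$; the paper does the opposite: it leaves every $T_i$ untouched and performs elementary internal expansions introducing ``bridge'' copies of the excess subforest $\Gamma=\bigcup_m\left( C_{n(1)}\cap T_{i_m}-Z_{n(1),m}\right)$, rerouting onto these bridges exactly those $2$-cells of $C_{n(1)}$ that met $\Gamma$, so that the new compact piece $\widehat{C}_1\supset C_1$ meets each tree precisely in the chosen connected subtree. This difference matters: the lemma asserts that $\widehat{X}$ still contains the $T_i$'s, and in the application (Step 3 of $\S 3$, together with Remark \ref{tree-condition} and condition (*)) these very trees are the loci along which the copies of the modified complexes are glued back together, so they must survive unaltered; after your drags they do not --- you retain only displaced copies. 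Moreover your key step is flawed as stated: the arc $\gamma\sub T_i$ joining two components of $D\cap T_i$ necessarily leaves $D$ (otherwise the components would coincide), so $\gamma\cdot\alpha^{-1}$ is not a loop in $D$ and simple connectivity of $D$ yields no van Kampen diagram in $D$; you must fill in the ambient complex instead, and then the sweep is not confined to a neighbourhood of $\widehat{C}_n$ disjoint from $\widehat{C}_{n-1}$, which undercuts both your level-by-level bookkeeping and the properness of the infinite composition of drags.

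The second gap is that you leave unproved exactly the point the lemma exists for: that the new filtration is \emph{nice}, i.e.\ that the complements still form a telescopic tower with projections as bonding maps. The heuristic ``attached along arcs, so free factors are only transported'' is not an argument: dragging $\gamma$ to $\alpha$ re-attaches, along arcs inside $\widehat{C}_n$, all the $2$-cells and tree branches that previously hung on $\gamma$ in the complement, and one must actually control the effect on $\pi_1$ of the complement and on the bonding homomorphisms. The paper sidesteps this computation by arranging its construction so that $\widehat{X}^{(1)}-int(\widehat{C}_1)$ is strongly proper homotopy equivalent to $X-int(C_{n(1)})$; the new tower is therefore (pro-isomorphic to) a subtower of the original telescopic tower and is automatically telescopic. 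To salvage your approach you would need an analogous identification of $\widehat{X}-int(\widehat{C}_n)$ with one of the old complements, and the dragging construction does not obviously provide one.
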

\begin{proof} We shall reroute certain $2$-cells of $C_i$ up on ``bridges"
doubling a subforest of $\bigcup T_j$ in order to get connected
intersections $C_i \cap T_j$. For this, consider $C_1 \sub X$ and
let $T_{i_1}, \dots, T_{i_r} \sub X$ be those trees of the given
collection which intersect $C_1$. We denote by $Z_{1,m} \sub
T_{i_m}$ ($1 \leq m \leq r$) the smallest connected subtree
containing $C_1 \cap T_{i_m}$, and let $n(1) \geq 1$ be such that
$Z_{1,m} \sub C_{n(1)}$, for every $1 \leq m \leq r$. Let
$T_{i_1}, \dots , T_{i_r}, T_{i_{r+1}}, \dots, T_{i_s}$ be those
trees of the collection which intersect $C_{n(1)}$, and take
$Z_{n(1),m} \sub T_{i_m}$ to be either the connected subtree
satisfying $Z_{1,m} \sub Z_{n(1),m} \sub C_{n(1)} \cap T_{i_m}$ if
$1 \leq m \leq r$, or any component of $C_{n(1)} \cap T_{i_m}$
otherwise. We will perform in $X$ a sequence of elementary
internal expansions, one for each $k$-cell ($k=0,1$) of $
\displaystyle \Gamma = \bigcup_{m=1}^s ( C_{n(1)} \cap T_{i_m} -
Z_{n(1),m} )$ as follows. First, we introduce for each $1$-cell
$d^1 \sub \Gamma$ a new $2$-cell $e^2$ having its boundary divided
into two arcs, one of them corresponding to the old $d^1$ and the
other being such that every $2$-cell of $C_{n(1)}$ containing
$d^1$ in the original complex is now being attached along this new
arc. Any other $2$-cell containing $d^1$ in the original complex
is still
attached along $d^1$ (see figure 1).\\
\begin{figure}
\centerline{\psfig{figure=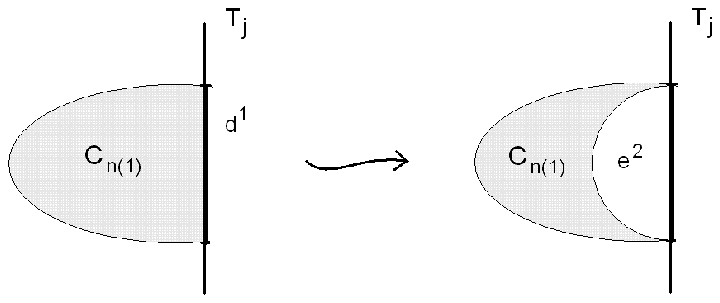,height=4.2cm,width=8cm}}
\vspace{-10mm} \label{figure1} \caption{}
\end{figure}
\begin{figure}
\centerline{\psfig{figure=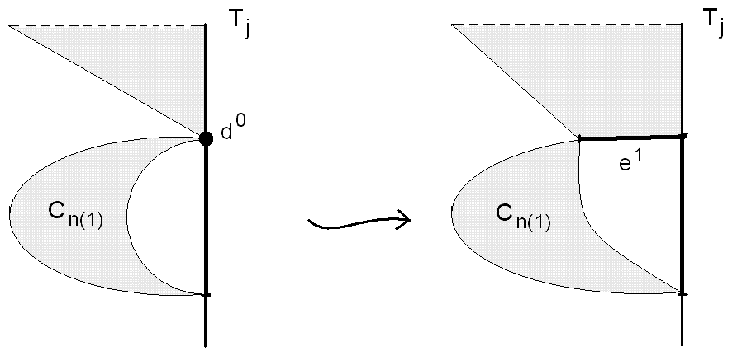,height=4.2cm,width=8cm}}
\vspace{-10mm} \label{figure2} \caption{}
\end{figure}
\indent These elementary internal expansions (leaving the trees
$T_{i_m}$ unaltered) yield a complex $X'$ in which $C_{n(1)}$ is
transformed into a new subcomplex $C'_{n(1)} \sub X'$ which meets
each $T_{i_m} \sub X'$ outside $Z_{n(1),m}$ in a finite number of
vertices in $\Gamma \cap T_{i_m}$. Next, we introduce for each of
these vertices $d^0 \in \Gamma \cap T_{i_m}$ a new $1$-cell $e^1$
with distinct boundary points, one of them corresponding to the
old $d^0$, and such that every $2$-cell outside $C'_{n(1)}$
containing a $1$-cell of $T_{i_m}$ adjacent to $d^0$ is now also
being attached along $e^1$, as shown in figure 2. Thus we get a
new complex $\widehat{X}^{(1)}$ (strongly proper homotopy
equivalent to $X'$) in which $C'_{n(1)}$ turns into a subcomplex
$\widehat{C}_1 \sub \widehat{X}^{(1)}$ which contains $C_1$ and
has connected intersection with each $T_{i_m}$, $1 \leq m \leq s$.
Observe that $\widehat{X}^{(1)} - int(\widehat{C}_1)$ is strongly
proper homotopy equivalent to $X - int(C_{n(1)})$, by construction.\\
\indent Let $C^{(1)}_1 \sub C^{(1)}_2 \sub \cdots \sub
\widehat{X}^{(1)}$ be the obvious filtration obtained from $C_1
\sub C_2 \sub \cdots \sub X$ (i.e., each $C_n$ ``expands" to
$C^{(1)}_n$), and take $N \geq 1$ such that $\widehat{C}_1 \sub
C^{(1)}_N$. We apply to $C^{(1)}_N \sub \widehat{X}^{(1)}$ the
same argument we applied to $C_1 \sub X$ so as to get a new
complex $\widehat{X}^{(2)}$ (via a strong proper homotopy
equivalence with $\widehat{X}^{(1)}$ which leaves fixed a compact
subcomplex containing $C^{(1)}_N \supset \widehat{C}_1$) and a
subcomplex $\widehat{C}_2 \supset C^{(1)}_N \supset \widehat{C}_1$
with the required properties. Iterating this process we obtain the
desired $2$-complex $\widehat{X}$ (as the limit of the complexes
$\widehat{X}^{(i)}$, $i \geq 1$) and a nice filtration
$\widehat{C}_1 \sub \widehat{C}_2 \sub \cdots \sub \widehat{X}$.
\end{proof}
\begin{remark} \label{tree-condition}
Note that the strong homotopy equivalence $X \lga \widehat{X}$
obtained in Lemma \ref{alter} maps every tree in $X$ to another
tree in $\widehat{X}$, by construction.
\end{remark}
\indent Next we introduce some notation. Let $G$ be a finitely
generated one-relator group and $P=\langle X;R \rangle$ be any
(finite) presentation of $G$ with a single defining relation
$R=Q^s$ ($s$ maximal) which is assumed to be a cyclically reduced
word. We denote by $K_P$ the standard (compact) $2$-dimensional
CW-complex associated to this presentation. Note that $K_P^1$ is a
bouquet of circles consisting of a $1$-cell $e_i$ for each element
of the basis $x_i \in X$, all of them sharing the single vertex in
$K_P$. Finally, $K_P$ is obtained from $K_P^1$ by attaching a
$2$-cell $d$ via a PL map $S^1 \lga K_P^1$ which is the
composition of the map $z \in S^1 \mapsto z^s \in S^1$ and a PL
map $f_Q : S^1 \lga K_P^1$ which spells out the word $Q$.
\begin{remark} \label{Magnus} Note that every lift in the universal cover $\tilde{d} \sub
\tilde{K}_P$ of the $2$-cell $d \sub K_P$ is a disk (as $R$ is
cyclically reduced). Moreover, by the Magnus' Freiheitssatz (see
\cite{LS,MKS}) every subcomplex of the $1$-skeleton $K^1_P$ not
containing all the $1$-cells involved in the relator $R$ lifts in
the universal cover $\tilde{K}_P$ to a disjoint union of trees.
\end{remark}
\begin{proposition} \label{prop} Given any finite one-relator group presentation
$P$ as above, the universal cover $\tilde{K}_P$ of the standard
2-complex $K_P$ is strongly proper homotopy equivalent to another
$2$-complex $\widehat{K}_P$ which admits a nice filtration
$\widehat{C}_1 \sub \widehat{C}_2 \sub \cdots \sub \widehat{K}_P$.
\end{proposition}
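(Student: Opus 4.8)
The plan is to argue by induction on the length $|R|$ of the defining relator, following the Magnus--Moldavanskii hierarchy of one-relator groups. There are two cases. If, after a change of free basis, $R$ is a power of a single generator, then $G \cong {\Z}/s \ast F$ for some $s \ge 1$ and some free group $F$ of finite rank (this subsumes both $G$ free and $G$ finite cyclic); here $\tilde{K}_P$ is a locally finite ``tree of $2$-disks'', which carries an evident nice filtration --- take each $\widehat{C}_n$ full and with frontier inside the $1$-skeleton, so that the tail components of its complement are simply connected. Otherwise, a change of free basis makes all but one of the generators have exponent sum zero in $R$, and at least one generator of exponent sum zero still occurs in $R$; call it $t$, and note that $R$ then involves at least two generators.

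A single Nielsen move on the generators, say $x_1 \mapsto x_1 x_2$, is realized on standard $2$-complexes by an elementary (free-face) expansion adjoining the new generator together with one defining relation, followed by an elementary \emph{internal} collapse eliminating the old generator (dragging the $R$-cell along its non-free face); passing to universal covers, this is a strong proper homotopy equivalence. Since changes of free basis act as $GL_n({\Z})$ on the exponent-sum vector of $R$, and since ``strongly proper homotopy equivalent'' is transitive, we may assume the situation just described (possibly after Moldavanskii's further rewriting). Rewriting $R$ in the symbols $a_{i,j} = t^{-j} a_i t^{j}$ then realizes $G$ as an HNN extension $G \cong H_0 \ast_{A \to B}$ with stable letter $t$, where $H_0 = \langle a_{i,j} : \mu_i \le j \le \nu_i \mid R_0 \rangle$ is a finitely presented one-relator group with $|R_0| < |R|$ (the $t$-syllables of $R$ have disappeared), and $A, B \le H_0$ are the Magnus subgroups on $\{a_{i,j} : \mu_i \le j \le \nu_i - 1\}$ and on its $t$-shift, which are free by the Freiheitssatz.

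Geometrically, $\tilde{K}_P$ is a tree of complexes over the Bass--Serre tree $\mathcal{T}$ of this splitting: a copy of the universal cover $\tilde{K}_{H_0}$ of the standard complex of the above presentation of $H_0$ over each vertex of $\mathcal{T}$, glued to its neighbours along lifts of the Magnus subgraphs $L_A, L_B \sub K_{H_0}^1$. By Remark \ref{Magnus}, applied inside $\tilde{K}_{H_0}$ (since $L_A$ and $L_B$ omit a generator occurring in $R_0$), these lifts form locally finite collections of trees. Applying the inductive hypothesis to $H_0$, we get a complex $\widehat{K}_{H_0}$ strongly proper homotopy equivalent to $\tilde{K}_{H_0}$ with a nice filtration; by Remark \ref{tree-condition} the gluing trees are carried to trees there. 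Feeding them into Lemma \ref{alter} produces a further complex, still strongly proper homotopy equivalent to $\tilde{K}_{H_0}$ and still containing the gluing trees, whose nice filtration meets each of them in a connected (hence contractible) subtree. Re-gluing these improved blocks along $\mathcal{T}$ yields the sought-for $\widehat{K}_P$; a candidate filtration $\widehat{C}_1 \sub \widehat{C}_2 \sub \cdots$ is built by exhausting $\mathcal{T}$ by finite subtrees and, at stage $n$, taking over each vertex of the $n$-th subtree a block-filtration term large enough to contain the relevant gluing-tree segments, plus short connecting arcs for the edges of that subtree. Each $\widehat{C}_n$ is simply connected by van Kampen's theorem, being glued from simply connected blocks along connected intersections.

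The main obstacle is to check that this candidate filtration is really \emph{nice}, i.e.\ that the tower $\{1\} \lgaf \pi_1(\widehat{K}_P - int(\widehat{C}_1)) \lgaf \pi_1(\widehat{K}_P - int(\widehat{C}_2)) \lgaf \cdots$ is telescopic. The complement of $\widehat{C}_n$ is again a tree of spaces --- over the unfilled part of $\mathcal{T}$ together with the ``outer'' portions of the boundary blocks --- so by van Kampen along the corresponding subforest of $\mathcal{T}$ its fundamental group is the free product of the fundamental groups of those outer portions (free, by niceness of the block filtrations) with one copy of the free group $A$ or $B$ for each edge of $\mathcal{T}$ leaving the filled region; hence it is free. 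What requires care is to coordinate the exhaustion of $\mathcal{T}$ with the choice of block-filtration terms so that passing from stage $n$ to stage $n+1$ only adjoins new free generators and eliminates others by projections: the block contributions advance by projections because the block filtrations are nice, and the edge contributions should enter as new free factors as an edge first reaches the filled region and be killed by a projection once that edge becomes interior. This bookkeeping, together with the local-finiteness verifications needed for Lemma \ref{alter} to apply and for the infinite re-gluing to produce a genuine PL complex, is where the real work of the proof lies.
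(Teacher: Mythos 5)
Your treatment of the case in which some generator of exponent sum zero occurs in $R$ is essentially the paper's own argument: your Bass--Serre/HNN tree-of-spaces picture of $\tilde{K}_P$ over the Magnus splitting coincides with the paper's decomposition of $\tilde{K}_P$ into copies of the universal covers of the pieces $K'_m$ of the intermediate infinite cyclic cover, glued along trees supplied by the Freiheitssatz (Remark \ref{Magnus}), followed by the induction, Lemma \ref{alter} with Remark \ref{tree-condition}, and the final bookkeeping showing the complements yield a telescopic tower. You flag that bookkeeping as ``where the real work lies'' without doing it; that matches the paper's Step 4 of $\S 3$, so as an outline it is acceptable, though incomplete.

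The genuine gap is your reduction to that case. Your induction measure is the relator length, and you propose to create a zero-exponent-sum generator by a change of free basis, realized by Nielsen moves as strong proper homotopy equivalences, ``since changes of free basis act as $GL_n({\Z})$ on the exponent-sum vector.'' But a base change that normalizes the exponent sums can lengthen the rewritten relator without bound, so the inequality $|R_0|<|R|$ you then invoke refers to the new, possibly much longer relator, and the induction is not well-founded. This is precisely the classical obstruction that forces the Magnus--Moldavanskii step to be an \emph{embedding} into a larger one-relator group rather than an automorphism: one adjoins a root, replacing $x_{i_0}$ by $A^q$ and $x_{i_1}$ by $BA^{-p}$, so that $A$ has exponent sum zero and the word obtained from $Q''$ by deleting $A^{\pm 1}$ is strictly shorter than $Q$. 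That move is not a Nielsen transformation and is not realized by elementary expansions and internal collapses of $K_P$: for $|q|>1$ the new group contains $G$ properly, so $\tilde{K}_{P''}$ is \emph{not} strongly proper homotopy equivalent to $\tilde{K}_P$, and one cannot ``assume the situation just described.'' The paper's $\S 4$ is devoted exactly to this point: $\tilde{K}_P$ is located inside the treated complex via the mapping cylinder adjunction $W=K_P\cup_{e_{i_0}}M$, using the lines $(q')^{-1}(A)$ which persist by Remark \ref{lines}; the nice filtration of $\widehat{K}_{P''}$ is then intersected with the resulting subcomplex $\widehat{K}_P$ after a further application of Lemma \ref{alter}; and freeness together with the projection property of the tower is recovered via the Grushko--Neumann theorem. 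None of this appears in your proposal, so as written your argument covers only the zero-exponent-sum case.
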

\begin{proof} The proof is modelled after
(\cite{DV}, Thm. 2.1), and the method we use goes back to Magnus
\cite{M}. With the notation above, we set $n= length(Q)$. Note
that if $n=1$ then $R={x_{i_0}}^{\pm s}$ for some $x_{i_0} \in X$,
and $\tilde{K}_P$ is the universal cover of the bouquet
$K_{{\Z}_s} \vee (\vee_{i \neq i_0} e_i)$, where $K_{{\Z}_s}$ is
the standard $2$-complex associated to the obvious presentation of
${\Z}_s$. The $2$-complex $\widehat{K}_P= \tilde{K}_P$ clearly
satisfies the required properties, and its fundamental pro-group
corresponds to the trivial tower. As an example, the universal
cover $\tilde{K}_P$ is depicted in figure 3 for the presentation
$P =\langle a,b ; a^2 \rangle$ of ${\Z}*{\Z}_2$, indicating in
dark color the first two subcomplexes of a (nice) filtration
$\widehat{C}_1 \sub \widehat{C}_2 \sub \cdots \sub \tilde{K}_P$.\\
\begin{figure}
\centerline{\psfig{figure=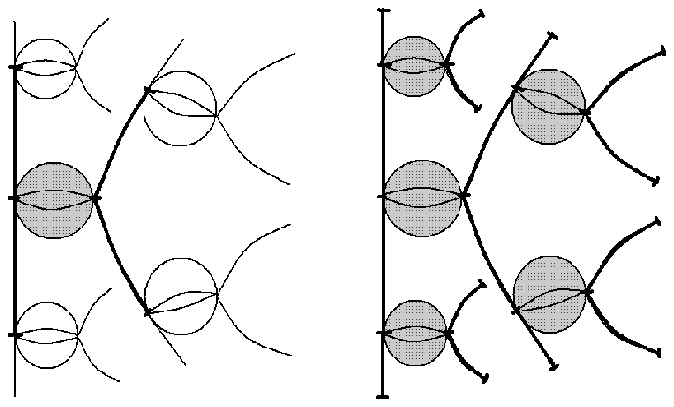,height=6cm,width=10cm}}
\vspace{-5mm} \label{figure3} \caption{}
\end{figure}
\indent Assume inductively the following two statements hold for
any (finite) group presentation $P' = \langle X' ; R' \rangle$
with a single defi\-ning relation $R'={Q'}^s$ ($s$ maximal) which
is a
cyclically reduced word with $k=length(Q') \leq n-1$ :\\
\begin{itemize}
\item[$(Case \; 1)_k$ :] If there is $x_{i_0} \in X'$ which occurs in
$Q'$ with exponent sum $\sigma_{Q'}(x_{i_0})=0$ then the
conclusion
of Proposition \ref{prop} follows for $\tilde{K}_{P'}$.\\
\item[$(Case \; 2)_k$ :] If the word $Q'$ contains no generators with
exponent sum zero then the conclusion of Proposition \ref{prop}
follows for $\tilde{K}_{P'}$.
\end{itemize}
Furthermore, we assume in addition that for any such $P'$ the
following condition is satisfied :
\begin{itemize}
\item[(*)] Every tree in the $1$-skeleton of $\tilde{K}_{P'}$ gets
mapped (under the strong proper homotopy equivalence) to another
tree in the final $2$-complex $\widehat{K}_{P'}$.
\end{itemize}

\indent Next we proceed to show case $k=n$ by proving that the
statements $(Case \; 1)_n$ and $(Case \; 2)_n$ are true as well as
condition (*) in each of them. Due to the rather long proofs of
both cases we deal with them separately in the two subsequent
sections.
\end{proof}
\begin{remark} It is well-known that if the one-relator $G$ is torsion-free
then the universal cover $\tilde{K}_P$ of $K_P$ is a contractible
$2$-dimensional $CW$-complex, by a result of Dyer and Vasquez
\cite{DV} which can be thought of as a geometric version of
Lyndon's Identity Theorem \cite{L} (see also \cite{Cock}).
Therefore, $\widehat{K}_P$ is also contractible and $\widehat{C}_1
\sub \widehat{C}_2 \cdots \sub \widehat{K}_P$ is a nice filtration
consisting of compact simply connected acyclic, and hence contractible, subcomplexes.\\
\indent In case $G$ has torsion we still find a contractible
subcomplex $\widehat{L}_P \sub \widehat{K}_P$ carrying the
fundamental pro-group (i.e., $pro-\pi_1(\widehat{L}_P) \cong
pro-\pi_1(\widehat{K}_P)$) and a nice filtration $\widehat{L}_1
\sub \widehat{L}_2 \sub \cdots \sub \widehat{L}_P$ consisting of
compact contractible subcomplexes. Indeed, fix a lift $\tilde{d}
\sub \tilde{K}_P$ of the $2$-cell $d \sub K_P$, and observe that
$\tilde{d}$ and all its translates $\tilde{d} \cdot g \sub
\tilde{K}_P$ (by the $G$-action) are copies of the disk $D^2$ (as
$R$ is cyclically reduced), which come together in subcomplexes
$L_k= \tilde{d} \cdot g_k \cup \tilde{d} \cdot (g_k Q) \cup \dots
\cup \tilde{d} \cdot (g_k Q^{s-1}) \sub \tilde{K}_P$, $g_k \in G$,
consisting of $s$ disks attached along their boundaries via the
identity map. It is worth noting that the only non-trivial
$2$-cycles in $\tilde{K}_P$ are those which are (finite)
combinations of non-trivial $2$-cycles in some of the $L_k$'s.
This is due to the fact that the relation module (associated with
the given presentation of $G$) is a cyclic ${\Z}G$-module with one
obvious relation (see \cite{Chis, Brown}). Next, we consider a
subcomplex $L_P \sub \tilde{K}_P$ containing only one $2$-cell
from each of the subcomplexes $L_k \sub \tilde{K}_P$ described
above. Note that then $L_P$ is contractible, and one can check
that the proof of Proposition \ref{prop} produces a new
$2$-complex $\widehat{L}_P \sub \widehat{K}_P$ (strongly proper
homotopy equivalent to $L_P$) together with a nice filtration
$\widehat{C}_1 \cap \widehat{L}_P \sub \widehat{C}_2 \cap
\widehat{L}_P \sub \cdots \sub \widehat{L}_P$ consisting of
compact contractible subcomplexes.
\end{remark}

\section{Proof that $(Case \; 1)_{\leq n-1} + (Case \; 2)_{\leq n-1} \Rightarrow (Case \; 1)_n$}
The purpose of this section is to prove $(Case \; 1)_n$. In fact,
we shall prove the conclusion is true for any (finite) one-relator
group presentation $P= \langle X;R \rangle$, $R=Q^s$ ($s$
maximal), assuming $R$ is cyclically reduced and there is $x_{i_0}
\in X$ which occurs in $Q$ with exponent sum $\sigma_Q(x_{i_0})=0$
such that the word obtained from $Q$ by deleting the symbols
$x_{i_0}$ and $x_{i_0}^{-1}$ has length $\leq n-1$. We shall
divide the proof into the following four steps
(we keep the notation for the cells of $K_P$ from $\S 2$).\\

\noindent {\bf 1. Notation and some preliminaries.} For each
integer $k$, let $J_k$ be a copy of the bouquet $\vee_{i \neq i_0}
e_i \sub K_P$ (with the $0$-cell of $K_P$ as base point) and take
$L$ to be the space obtained by attaching the space $J_k$ to the
real line ${\R}$ at each integer point $k$ (through its base
point). Let $h : L \lga L$ be the PL homeomorphism which takes a
point of $J_k$ to the corresponding point of $J_{k+1}$ and sends
$x \in {\R}$ to $x+1 \in {\R}$, and denote by $p : L \lga L/H
\equiv K_P^1$ the resulting covering map, where $H$ is the
infinite cyclic subgroup of self-homeomorphisms generated by $h$.
Moreover, one can check that there is a pointed PL map $f' : S^1
\lga L$ (taking $0 \in {\R}$ as the base point of $L$) such that
$pf'=f_Q$ ($=$the given PL map $S^1 \lga K_P^1$ which spells out
the word $Q$) and $f'$ spells out a cyclically reduced word $Q'$
(with $length(Q')<n$) in the free group $F(Y)$, where $Y$ is the
set of generators for $\pi_1(L, \{0\})$ obtained from those for
each space $J_k$ via the obvious base point changes (see
\cite{DV}). Let $u,v \in {\Z}$ be the integers such that $[u,v]
\sub {\R}$ is the smallest interval satisfying $\displaystyle
f'(S^1) \sub {\R} \cup \bigcup_{k \in [u,v]} J_k$. Let $K'_P$ be
the space obtained from $L$ by gluing disks $D^2_k$ via the
composition $f_k''$ of the map $z \mapsto z^s$ with the PL map
$f'_k=h^kf' : S^1 \lga L$, $k \in {\Z}$. We keep denoting by $H$
the infinite cyclic subgroup of self-homeomorphisms of $K'_P$
generated by the obvious extension of $h$. Thus, we have an
(intermediate) covering space $p : K'_P \lga K'_P/H \equiv K_P$
(corresponding to the kernel of the homomorphism $F(X)/N(R) \lga
{\Z}$ induced by $F(X) \lga {\Z}, w \mapsto \sigma_w(x_{i_0})$),
and we denote by $q : \tilde{K}_P \lga K'_P$ the corresponding
universal covering map. Let $K'_m \sub K'_P$ be the subcomplex
consisting of the real line ${\R}$ together with $\displaystyle
D_m^2 \cup_{f''_m} \left( \bigcup_{k \in [u+m,v+m]} J_k \right)$.
Note that, as the interval $[u+m, v+m]$ can be shrunk to a point,
$K'_m$ is homotopy equivalent to the wedge $K_{P'} \vee {\R}$
where $K_{P'}$ is the standard $2$-dimensional CW-complex
associated to a group presentation $P' = \langle X';{Q'}^s
\rangle$ where $Q'$ is as above and $X' \sub Y$ is the
subset consisting of those generators contained in $K'_m$.\\

\noindent {\bf 2. The structure of the universal cover
$\tilde{K}_P$.} It follows from (\cite{DV}, Sublemma 3.2.2) that
the inclusion $K'_m \sub K'_P$ ($m \in {\Z}$) induces an injection
of fundamental groups (this need not hold if $R=Q^s$ has not been
cyclically reduced, see Remark \ref{example}). In this way, each
component of the pre\-image $q^{-1}(K'_m) \sub \widetilde{K}_P$ is
a copy of the universal cover of $K'_m$. On the other hand, as
$K'_P = \displaystyle \bigcup_{m \in {\Z}} K'_m$, it is not hard
to see that the universal cover $\widetilde{K}_P$ then consists of
collections of copies of the universal covers of the spaces $K'_m$
($m \in {\Z}$), appropriately glued together along a certain
collection of trees. More precisely, observe that for e\-ve\-ry $m
> n$, $K'_m \cap K'_n = {\R}$ if $m-n > v-u$; otherwise, $K'_m
\cap K'_n \sub {\R} \cup J_{u+m} \cup \dots \cup J_{v+n}$ is a
graph with the property that the inclusion $K'_m \cap K'_n \sub
K'_m$ (or $K'_n$) induces an injection of the fundamental groups,
by the choice of the integers $u,v$. Thus, each component of the
preimage $q^{-1}(K'_m \cap K'_n) \sub \widetilde{K}_P$ is a copy
of the universal cover of $K'_m \cap K'_n$ which is a tree by an
application of the Magnus' Freiheitssatz to the presentation $P' =
\langle X';{Q'}^s \rangle$, as the generators of $X'$ which occur
in $K'_m \cap K'_n$ are not all of those involved in the relator
(see Remark \ref{Magnus}). Moreover, if more than two copies of
the universal covers of some of the subcomplexes $K'_m$ have
non-empty intersection in $\widetilde{K}_P$ then this intersection
must be a subtree of the intersection of any two of them.\\

\noindent {\bf 3. Altering $\widetilde{K}_P$ within its (strong)
proper homotopy type.} Next we check that the universal cover of
each complex $K'_m$ is strongly proper homotopy equivalent to
another $2$-complex $\widehat{K}'_m$ which admits a nice
filtration. To this end, observe that the universal cover
$\widetilde{K}_{P'}$ of $K_{P'}$ ($P'$ as in Step $1$) has this
property by the inductive hypothesis. Moreover, the universal
cover of $K'_m$ is obtained from the universal cover of $K_{P'}
\vee {\R}$ by ''expanding" in an appropriate way each vertex to an
interval $[u+m,v+m]$ so as to recover the subgraph $K'_m \cap L$.\\
\indent We perform in $\widetilde{K}_P$ all the elementary
internal collapses and/or expansions needed for passing from each
copy of the universal cover of $K'_m$ ($m \in {\Z}$) inside
$\widetilde{K}_P$ to the corresponding $2$-complex
$\widehat{K}'_m$. In this way we obtain a new $2$-complex
$\widehat{K}'_P$ (strongly proper homotopy equivalent to
$\widetilde{K}_P$ and still containing a copy of $q^{-1}({\R})$)
which consists of collections of copies of the complexes
$\widehat{K}'_m$ ($m \in {\Z}$) glued together appropriately along
the corresponding collection of trees obtained from those in the
construction of $\widetilde{K}_P$, according to condition (*) of
the inductive hypothesis (see $\S 2$). For every $m \in {\Z}$ and
every copy of $\widehat{K}'_m$ inside $\widehat{K}'_P$ we consider
a nice filtration $C_{1,m} \sub C_{2,m} \sub \cdots \sub
\widehat{K}'_m$ and denote by $T_1, T_2, \dots, T_s$ those trees
along which the given copy of $\widehat{K}'_m$ intersects with a
copy (inside $\widehat{K}'_P$) of any other subcomplex
$\widehat{K}'_n$ ($n \neq m$). Let $\widehat{C}_{1,m} \sub
\widehat{C}_{2,m} \sub \cdots \sub \widehat{K}_m$ be the
$2$-complex and the nice filtration obtained from $C_{1,m} \sub
C_{2,m} \sub \cdots \sub \widehat{K}'_m$ and the collection of
trees $T_1, T_2, \dots, T_s$ proceeding as in Lemma \ref{alter}.
In particular, each intersection $\widehat{C}_{n,m} \cap T_i$ ($n
\geq 1, 1 \leq i \leq s$) is either empty or connected (and
hence contractible).\\
\indent Let $\widehat{K}_P$ be the $2$-complex (strongly proper
homotopy equivalent to $\widehat{K}'_P$ and hence to
$\widetilde{K}_P$) obtained from collections of copies of the
complexes $\widehat{K}_m$ ($m \in {\Z}$) glued together as follows
: a copy of $\widehat{K}_m$ is being glued to a copy of
$\widehat{K}_n$ along a tree $T$ whenever the corresponding copies
of $\widehat{K}'_m$ and $\widehat{K}'_n$ inside $\widehat{K}'_P$
are glued together along the corresponding copy of the same tree
$T$.\\
\indent Note that in the process of altering $\widetilde{K}_p$ to
$\widehat{K}_P$ we have used inductive hypothesis and applications
of Lemma \ref{alter}. This way we ensure that
condition (*) gets satisfied (see Remark \ref{tree-condition}).\\

\noindent {\bf 4. Building the required filtration for
$\widehat{K}_P$.} We must now build a nice filtration for
$\widehat{K}_P$, and this will finish the proof. We keep the
notation from Step $3$. Fix $m \in {Z}$ and a copy of
$\widehat{K}_m$ in $\widehat{K}_P$, and consider
$\widehat{C}_{1,m} \sub \widehat{K}_m$. By abuse of notation, we
will denote by $\widehat{K}_{m(1)}, \dots , \widehat{K}_{m(p_1)}$
those different copies in $\widehat{K}_P$ of the corresponding
complexes which intersect the chosen copy of $\widehat{K}_m$ at
points of $\widehat{C}_{1,m}$ (i.e., if $m(i)=m(j)$ for some $1
\leq i < j \leq p_1$, the complexes $\widehat{K}_{m(i)}$,
$\widehat{K}_{m(j)}$ above are considered different copies of the
same complex). Take $N_1 \geq 1$ such that $\widehat{C}_{N_1,
m(j)} \cap \widehat{K}_m \supset \widehat{C}_{1,m} \cap
\widehat{K}_{m(j)}$ (both intersections being connected subtrees
by hypothesis), $1 \leq j \leq p_1$. Set $\widehat{C}_1 =
\widehat{C}_{1,m} \cup \displaystyle \left( \bigcup_{j=1}^{p_1}
\widehat{C}_{N_1, m(j)} \right) \sub \widehat{K}_P$ which is
easily shown to be simply connected, as the simply connected
compact subcomplexes $\widehat{C}_{\alpha, \beta}$ intersect each
other along connected subtrees of those trees used in the
construction of $\widehat{K}_P$ (see Step $3$). Moreover, by the
generalized Van Kampen's argument, as $\widehat{C}_{1,m}$ and each
$\widehat{C}_{N_1, m(j)}$ are members of a nice filtration of the
corresponding copy of $\widehat{K}_m$ and $\widehat{K}_{m(j)}$
respectively and (every copy of) any other subcomplex
$\widehat{K}_\lambda \sub \widehat{K}_P$ is simply connected, one
can check that (choosing base points on any given base ray in
$\widehat{K}_P$) the fundamental group of $\widehat{K}_P -
int(\widehat{C}_1)$ is indeed a free product of the fundamental
groups of $\widehat{K}_P - int(\widehat{C}_{1,m})$ and
$\widehat{K}_P - int(\widehat{C}_{N_1, m(j)})$ together with an
extra free group (of finite rank) coming from the intersection of
$\widehat{C}_{1,m}$ and each $\widehat{C}_{N_1, m(j)}$ with any
other subcomplex $\widehat{K}_\lambda \sub \widehat{K}_P$.\\
\indent Consider now $\widehat{C}_{2,m} \sub \widehat{K}_m$. We
denote by $\widehat{K}_{m(1)}, \dots , \widehat{K}_{m(p_2)}$ ($p_2
\geq p_1$) those different copies in $\widehat{K}_P$ of the
corresponding complexes which intersect the given copy of
$\widehat{K}_m$ at points of $\widehat{C}_{2,m}$, and take $N_2
\geq N_1$ such that $\widehat{C}_{N_2, m(j)} \cap \widehat{K}_m
\supset \widehat{C}_{2,m} \cap \widehat{K}_{m(j)}$ (both being
connected subtrees), $1 \leq j \leq p_2$. Next, for each $1 \leq j
\leq p_2$, we denote by $\widehat{K}_{m(j,1)}, \dots,
\widehat{K}_{m(j, q_j)}$ those different copies in $\widehat{K}_P$
of the corres\-pon\-ding complexes which intersect the given copy
of $\widehat{K}_{m(j)}$ at points of $\widehat{C}_{N_2, m(j)}$,
and take $N_3 \geq N_2$ such that $\widehat{C}_{N_3, m(j,l)} \cap
\widehat{K}_{m(j)} \supset \widehat{C}_{N_2, m(j)} \cap
\widehat{K}_{m(j,l)}$ (both being connected subtrees), $1 \leq l
\leq q_j$. Set $\widehat{C}_2 = \widehat{C}_{2,m} \cup
\displaystyle \left( \bigcup_{j=1}^{p_2} \widehat{C}_{N_2, m(j)}
\right) \cup \left( \bigcup_{j=1}^{p_2} \bigcup_{l=1}^{q_j}
\widehat{C}_{N_3, m(j,l)} \right) \sub \widehat{K}_P$ which is
again simply connected and contains $\widehat{C}_1$, by
construction.\\
\indent As before, one can also check that the fundamental group
of $\widehat{K}_P - int(\widehat{C}_2)$ is finitely generated and
free. Finally, repeating this process (starting off with the
successive $\widehat{C}_{n,m} \sub \widehat{K}_m$ and going each
time one step further inside $\widehat{K}_P$) we get a filtration
$\widehat{C}_1 \sub \widehat{C}_2 \sub \cdots \sub \widehat{K}_P$
of compact simply connected subcomplexes. Moreover, choosing base
points on any given base ray, one can easily check that
$rank(\pi_1(\widehat{K}_P - int(\widehat{C}_{n+1}))) \geq
rank(\pi_1(\widehat{K}_P - int(\widehat{C}_n)))$ and the
homomorphism $\pi_1(\widehat{K}_P - int(\widehat{C}_{n+1})) \lga
\pi_1(\widehat{K}_P - int(\widehat{C}_n))$ can be taken to be a
projection between finitely generated free groups.

\begin{remark} \label{lines}
Notice that the argument used for the (inductive) proof of $(Case
\; 1)_n$ shows that the inverse image $(q \circ p)^{-1}(e_{i_0})
\sub \widetilde{K}_P$ of the $1$-cell $e_{i_0} \sub K_P$ in Step
$1$ remains as a subcomplex of the final $2$-complex
$\widehat{K}_P$.
\end{remark}
\begin{remark} \label{example} Consider the finite group presentation
$P= \langle a,b ; a^{-1}ba^{-1}b^{-1}a \rangle$ with a single
defining relation $R=a^{-1}ba^{-1}b^{-1}a$ ($R=Q, s=1$) which is
not a cyclically reduced word in the free group $F(\{a,b\})$. Let
$K_P$ be the standard $2$-complex associated with this group
presentation, and let $K'_P$ and $K'_m$ ($m \in {\Z}$) be as above
(here, $x_{i_0}=b$). It is easy to see that in this case $K'_P =
\widetilde{K}_P$ and the inclusion $K'_m \sub K'_P$ does not
induce an injection between the fundamental groups (see figure 4).
On the other hand, this example shows a compact $2$-dimensional
CW-complex $K_P$ with $\pi_1(K_P) \cong {\Z}$ whose universal
cover can not be written as an increasing union of compact simply
connected subcomplexes.
\begin{figure}

\centerline{\psfig{figure=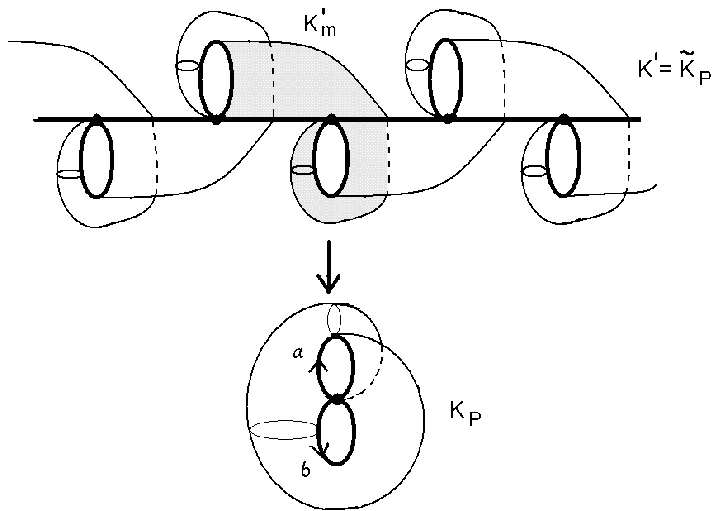,height=7cm,width=11cm}}
\label{figure4} \caption{}
\end{figure}
\end{remark}
\section{Proof that $(Case \; 1)_{\leq n-1} + (Case \; 2)_{\leq n-1} \Rightarrow (Case \; 2)_n$}
The purpose of this section is to prove $(Case \; 2)_n$ for any
(finite) one-relator group presentation $P= \langle X;R \rangle$
where $R=Q^s$ ($s$ maximal) is assumed to be a cyclically reduced
word with $length(Q)=n \geq 2$. Thus, suppose $Q$ contains no
generators
with exponent sum $0$. We divide the proof into the following three steps.\\

\noindent {\bf 1. Notation.} Since $length(Q) =n \geq 2$ it
follows that $Q$ must involve at least two generators $x_{i_0},
x_{i_1} \in X$. We introduce new symbols $A, B \notin F(X)$ and
take $X'= (X - \{x_{i_0} \}) \cup \{A\}$ and $X''= (X - \{x_{i_0},
x_{i_1} \}) \cup \{A, B\}$. Consider the group presentations $P' =
\langle X'; {Q'}^s \rangle$ and $P'' = \langle X''; {Q''}^s
\rangle$, where $Q'$ is obtained from $Q$ by replacing $x_{i_0}$
with $A^q$ ($q=\sigma_Q(x_{i_1}$), the exponent sum of $x_{i_1}$
in $Q$), and $Q''$ is obtained from $Q'$ by repla\-cing $x_{i_1}$
with $BA^{-p}$ ($p= \sigma_Q(x_{i_0})$). The words $Q', Q''$ are
not proper powers and $\sigma_{Q''}(A)=0$. Furthermore, we may
assume $Q''$ cyclically reduced, since the associated $2$-complex
$K_{P''}$ will only change up to homotopy and its universal cover
will only change up to strong proper
homotopy.\\

\noindent {\bf 2. The universal covers $\widetilde{K}_{P'}$,
$\widetilde{K}_{P''}$ and their relation with $\widetilde{K}_P$.}
Observe that the word obtained from $Q''$ by deleting the symbols
$A$ and $A^{-1}$ has length less than $n=length(Q)$, and hence the
induction hypothesis together with the argument used in $\S 3$
yields that the universal cover of $K_{P''}$ is strongly proper
homotopy equivalent to a $2$-complex $\widehat{K}_{P''}$ which
admits a nice filtration. According to \cite{DV}, $K_{P'}$ and
$K_{P''}$ are homotopy equivalent, and hence their universal
covers $\widetilde{K}_{P'}$ and $\widetilde{K}_{P''}$ are proper
homotopy equivalent. In fact, one can easily describe this proper
homotopy equivalence geo\-metrically as follows (see figure 5).
The complex $\widetilde{K}_{P'}$ is obtained from
$\widetilde{K}_{P''}$ by sliding the final endpoint $\beta(1)$ of
each lift $\beta \sub \widetilde{K}_{P''}$ of the (oriented)
generating circle corresponding to $B$ (dragging the material of
the $2$-cells involved, thus substituting the old one) over the
edge path $\alpha, \alpha \cdot A^{-1}, \dots, \alpha \cdot
A^{-p+1} \sub \widetilde{K}_{P''}$, where $\alpha$ is the lift of
the inverse path of the (oriented) generating circle corresponding
to $A$ whose initial endpoint $\alpha(0)$ coincides with
$\beta(1)$. Note that as $A$ generates an infinite cyclic subgroup
in both group presentations $P'$ and $P''$, the support of each of
the above edge paths is homeomorphic to a closed interval.
\begin{figure}
\centerline{\psfig{figure=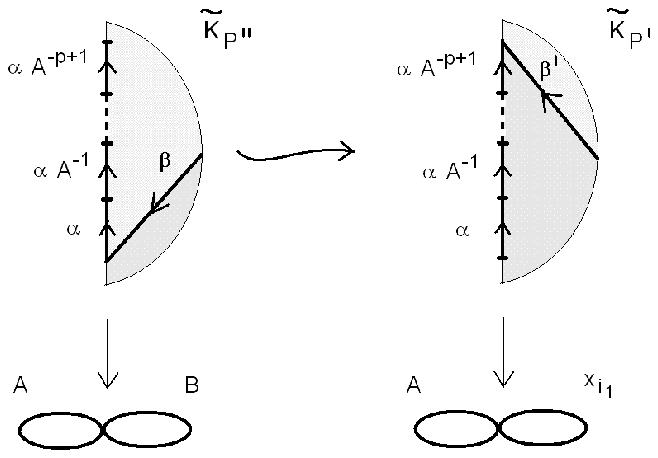,height=6cm,width=11cm}}
\label{figure5} \caption{}
\end{figure}
It is not hard to see from this description that
$\widetilde{K}_{P'}$ and $\widetilde{K}_{P''}$ are in fact
strongly proper homotopy equivalent. It
remains to show that $\widetilde{K}_P$ indeed has the required property.\\
\indent Let $e_{i_0} \sub K_P$ be the $1$-cell corresponding to
the generator $x_{i_0}$, and let $M$ be the mapping cylinder of a
map $e_{i_0} \lga S^1$ of degree $q$. Then, the adjunction complex
$W= K_P \cup_{e_{i_0}} M$ is homotopy equivalent to $K_{P'}$ and
hence their universal co\-vers $\widetilde{W}$ and
$\widetilde{K}_{P'}$ are proper homotopy equivalent. In fact,
$\widetilde{W}$ is built from copies of $\widetilde{K}_P$ and $Y_q
\times {\R}$ glued together appropriately along $Fr (Y_q \times
{\R})$, where $Y_q \sub {\R}^2$ consists of $q$ segments $[u,
v_i], 1 \leq i \leq q$, sharing a common vertex $u$. From here,
one gets the universal cover $\widetilde{K}_{P'}$ by shrinking
each copy of $Y_q \times {\R}$ to its centerline $\{u\} \times
{\R}$ (projecting down onto the base of $M$) which gets identified
with each connected component of $(q')^{-1}(A) \sub
\widetilde{K}_{P'}$, where $q' : \widetilde{K}_{P'} \lga K_{P'}$
is the universal covering map and $A \sub K_{P'}$ is the circle
(replacing $e_{i_0} \sub K_P$) representing the basis element $A
\in X'$ (see figure 6).
\begin{figure}
\centerline{\psfig{figure=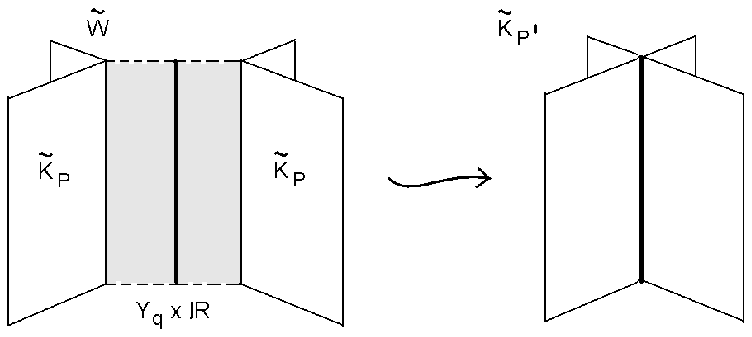,height=5cm,width=10cm}}
\vspace{-10mm} \label{figure6} \caption{}
\end{figure}
Notice that each component of $(q')^{-1}(A)$ is a line and the
subcomplex $(q')^{-1}(A) \sub \widetilde{K}_{P'}$ remains
unaltered when passing from $\widetilde{K}_{P'}$ to
$\widetilde{K}_{P''}$ and from $\widetilde{K}_{P''}$ to
$\widehat{K}_{P''}$ (see Remark \ref{lines}). It is easy to see
that
$\widetilde{W}$ is strongly proper homotopy equivalent to $\widetilde{K}_{P'}$.\\

\noindent {\bf 3. Altering $\widetilde{K}_P$ to $\widehat{K}_P$
and getting a nice filtration.} Choose a copy of $\widetilde{K}_P$
in $\widetilde{W}$ and denote by $\widehat{K}_P \sub
\widehat{K}_{P''}$ the $2$-complex that $\widetilde{K}_P$ ends up
being strongly proper homotopy equivalent to when performing all
the elementary internal collapses and/or expansions in order to
get $\widetilde{W} \sim \widetilde{K}_{P'} \sim
\widetilde{K}_{P''} \sim \widehat{K}_{P''}$ (here, ``$\sim$"
stands for strong proper homotopy equivalence). Observe that the
composition $\widetilde{W} \lga \widehat{K}_{P''}$ of these strong
homootopy equivalences maps every tree in $\widetilde{W}$ to
another tree in $\widehat{K}_{P''}$ (by construction), and hence
condition (*) gets satisfied (see $\S 2$). Finally, given a nice
filtration $\widehat{C}''_1 \sub \widehat{C}''_2 \sub \cdots \sub
\widehat{K}_{P''}$, we may assume that the intersection of each
$\widehat{C}''_i$ with every component of $(q')^{-1}(A)$ is either
empty or connected, by an application of Lemma \ref{alter} (and
replacing $\widehat{K}_{P''}$ if necessary). Given the above, one
can get a nice filtration $\widehat{C}_1 \sub \widehat{C}_2 \sub
\cdots \sub \widehat{K}_P$ where each $\widehat{C}_i$ can be
chosen to be the intersection $\widehat{C}''_i \cap
\widehat{K}_P$, $i \geq 1$. Indeed, one can check that the
fundamental group of each component $\widehat{J}$ of
$\widehat{K}_P - int(\widehat{C}_i)$ is a free factor of the
fundamental group of the corresponding component $\widehat{J}''
\supset \widehat{J}$ of $\widehat{K}_{P''} - int(\widehat{C}''_i)$
(as $cl(\widehat{J}'' - \widehat{J})$ intersects $\widehat{J}$
along a subcomplex of the collection of lines $(q')^{-1}(A)$), and
hence it is finitely generated and free by the Grushko-Neumann
theorem. Moreover, choosing base points on any given base ray, it
is not hard to check that $rank(\pi_1(\widehat{K}_P -
int(\widehat{C}_{i+1}))) \geq rank(\pi_1(\widehat{K}_P -
int(\widehat{C}_i)))$ and the homomorphism $\pi_1(\widehat{K}_P
-int(\widehat{C}_{i+1})) \lga \pi_1(\widehat{K}_P -
int(\widehat{C}_i))$ can be taken to be a projection between
finitely generated free groups.

\section{Appendix} \label{appendix}
This section is intended to provided the background and notation
needed in this paper, specially the notions of fundamental
pro-group and semistability at infinity. In what follows, we will
be working within the category $tow-Gr$ of {\it towers of groups}
whose objects are inverse sequences of groups
$$\underline{A} = \{ A_0 \stackrel{\phi_1}{\lgaf} A_1 \stackrel{\phi_2}{\lgaf} A_2 \lgaf \cdots \}$$
A morphism in this category will be called a {\it pro-morphism}.
See \cite{Geo, MarSe}
for a general reference.\\
\indent A tower $\underline{L}$ is a {\it free tower} if it is of
the form
$$\underline{L} = \{ L_0 \stackrel{i_1}{\lgaf} L_1 \stackrel{i_2}{\lgaf} L_2 \lgaf \cdots \}$$
where $L_i = \langle B_i \rangle$ are free groups of basis $B_i$
such that $B_{i+1} \sub B_i$, the differences $B_i - B_{i+1}$ are
finite and $\bigcap_{i=0}^{\infty} B_i = \emptyset$, and the
bonding homomorphisms $i_k$ are given by the corresponding basis
inclusions. On the other hand, a tower $\underline{P}$ is a {\it
telescopic tower} if it is of the form
$$\underline{P} = \{ P_0 \stackrel{p_1}{\lgaf} P_1 \stackrel{p_2}{\lgaf} P_2 \lgaf \cdots \}$$
where $P_i = \langle D_i \rangle$ are free groups of basis $D_i$
such that $D_{i-1} \sub D_i$, the differences $D_i - D_{i-1}$ are
finite (possibly empty),
and the bonding homomorphisms $p_k$ are the obvious projections.\\
\indent We will also use the full subcategory $(Gr, tow-Gr)$ of
$Mor(tow-Gr)$ whose objects are arrows $\underline{A} \lga G$,
where $\underline{A}$ is an object in $tow-Gr$ and $G$ is a group
regarded as a constant tower whose bonding maps are the identity.
Morphisms in $(Gr, tow-Gr)$ will also be
called pro-morphisms.\\
\indent From now on, $X$ will be a (strongly) locally finite
CW-complex. A proper map $\omega : [0, \infty) \lga X$ is called a
{\it proper ray} in $X$. We say that two proper rays $\omega,
\omega'$ {\it define the same end} if their restrictions
$\omega|_{\N}, \omega'|_{\N}$ are properly homotopic. Moreover, we
say that they {\it define the same strong end} if $\omega$ and
$\omega'$ are in fact properly homotopic. The CW-complex $X$ is
said to be semistable at infinity if any two proper rays
defining the same end also define the same strong end.\\
\indent Given a base ray $\omega$ in $X$ and a collection of
finite subcomplexes $C_1 \sub C_2 \sub \cdots \sub X$ so that $X =
\bigcup_{n=1}^{\infty} C_n$, the following tower, $pro-\pi_1(X,
\omega)$,
$$ \{ \pi_1(X, \omega(0)) \leftarrow \pi_1(X - int(C_1), \omega(t_1))
\leftarrow \pi_1(X - int(C_2), \omega(t_2)) \leftarrow \cdots \}$$
can be regarded as an object in $(Gr, tow-Gr)$ and it is called
the {\it fundamental pro-group of $(X, \omega)$}, where
$\omega([t_i, \infty)) \sub X - int(C_i)$ and the bonding
homomorphisms are induced by the inclusions. This tower does not
depend (up to pro-isomorphism) on the sequence of subcomplexes
$\{C_i\}_i$. It is worth mentioning that if $\omega$ and $\omega'$
define the same strong end, then  $pro-\pi_1(X, \omega)$ and
$pro-\pi_1(X, \omega')$ are pro-isomorphic. In particular, we may
always assume that $\omega$ is a cellular map. It is known that
$X$ is semistable at infinity if and only if $pro-\pi_1(X,
\omega)$ is pro-isomorphic to a tower where the bonding maps are
surjections. Moreover, in this case $\pi^{e}_1(X, \omega) =
{\limin} \; pro-\pi_1(X, \omega)$ is a well-defined useful
invariant which only depends (up to isomorphism) on the end
determined by $\omega$ (see \cite{GM2}). In a similar way, one can
define objects in $(Gr, tow-Gr)$ corresponding to the higher
homotopy pro-groups of
$(X, \omega)$.\\
\indent Finally, given a finitely presented group $G$ and a finite
$2$-dimensional CW-complex $X$ with $\pi_1(X) \cong G$, we say
that $G$ is semistable at infinity if the universal cover
$\tilde{X}$ of $X$ is so, and we will refer to the fundamental
pro-group of $\tilde{X}$ as the fundamental pro-group of $G$.

\section*{Acknowledgements}
The first three authors were supported by the project MTM
2007-65726. This research was also supported by Slovenian-Spanish
research grant BI-ES/04-05-014. The authors acknowledge remarks
and suggestions by Louis Funar.

\end{document}